\date{July 22, 2020}
\title[Curved foldings with common creases 
and crease patterns 
]{%
Curved foldings with common 
creases  and crease patterns}
\author{A.~Honda}
\address[Atsufumi Honda]{
Department of Applied Mathematics, 
Faculty of Engineering, Yokohama National University,
79-5 Tokiwadai, Hodogaya, Yokohama 240-8501, Japan
}
\email{honda-atsufumi-kp@ynu.ac.jp}
\author{K.~Naokawa}
\address[Kosuke Naokawa]{%
Department of Computer Science, 
Faculty of Applied Information Science,
Hiroshima Institute of Technology,  
2-1-1 Miyake, Saeki, Hiroshima, 731-5193, Japan
}
\email{k.naokawa.ec@cc.it-hiroshima.ac.jp}
\author{K. Saji}
\address[Kentaro Saji]{%
  Department of Mathematics,
  Faculty of Science,
  Kobe University,
  Rokko, Kobe 657-8501}
\email{saji@math.kobe-u.ac.jp}
\author{M. Umehara}
\address[Masaaki Umehara]{%
  Department of Mathematical and Computing Sciences,
  Tokyo Institute of Technology,
  Tokyo 152-8552, Japan}
\email{umehara@is.titech.ac.jp}
\author{K. Yamada}
\address[Kotaro Yamada]{%
  Department of Mathematics,
  Tokyo Institute of Technology,
  Tokyo 152-8551, Japan}
\email{kotaro@math.titech.ac.jp}
\thanks{%
The first author was partially supported by 
Grant-in-Aid for Early-Career Scientists
 No.~19K14526. The second author was partially supported by 
Grant-in-Aid for Young Scientists (B) No.~17K14197,
and the third author
was 
partially supported by  Grant-in-Aid for 
Scientific Research (C) No.\ 18K03301
The fifth author 
was partially 
supported by Grant-in-Aid for 
Scientific Research (B) No.\ 17H02839.
}%
\newcommand{\op}[1]{{\operatorname{#1}}}
\newcommand{\R}{\boldsymbol{R}}
\newcommand{\mb}[1]{{\mathbf #1}}
\renewcommand{\phi}{\varphi}
\renewcommand{\epsilon}{\varepsilon}
\renewcommand{\det}{\op{det}}
\numberwithin{equation}{section}
\newtheorem{Theorem}{Theorem}[section]
\newtheorem{Proposition}[Theorem]{Proposition}
\newtheorem{Corollary}[Theorem]{Corollary}
\newtheorem{Lemma}[Theorem]{Lemma}
\theoremstyle{definition}
\newtheorem{Remark}[Theorem]{Remark}
\newtheorem*{acknowledgments}{Acknowledgments}
       \def\@makefnmark{%
               \leavevmode
               \raise.9ex\hbox{\check@mathfonts
                       \fontsize\sf@size\z@\normalfont%
                               \@thefnmark}%
       }
\begin{document}
\maketitle
\begin{abstract}
Consider a curve $\Gamma$ in a domain $D$ in the plane $\R^2$.
Thinking of $D$ as a piece of paper, one can make a curved folding $P$ 
in the Euclidean space $\R^3$.
The singular set $C$ of $P$ as a space curve
is called the {\it crease} of $P$ and the initially given plane curve $\Gamma$
is called the {\it crease pattern} of $P$.
In this paper, we show that in general there are four 
distinct non-congruent curved foldings 
with a given pair consisting of a crease 
and crease pattern.
Two of these possibilities were already known,
but it seems that the other two possibilities (i.e.
four possibilities in total) are presented here for the first time.
\end{abstract}

\begin{figure}[htb]
\begin{center}
\includegraphics[height=3.2cm]{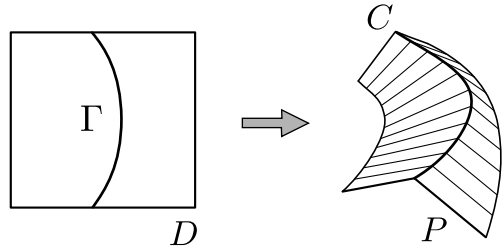}\qquad
\caption{A given crease pattern $\Gamma(\subset D)$
in $\R^2$  (left)
 and its realization $P$ as a curved folding along the  crease $C$
(right).}\label{fig:Ori}
\end{center}
\end{figure}

\section*{Introduction}
Let $C$ be a space curve with positive 
curvature embedded in $\R^3$, and let $\Gamma$ be a curve
with non-vanishing curvature function (i.e. the curve $\Gamma$
has no inflections) embedded in a plane $\R^2$.
We assume that the maximum  
of the absolute value of the curvature function of $\Gamma$ is
less than the minimum of the curvature function of $C$.
We are interested in curved foldings
whose crease is $C$ and crease pattern is $\Gamma$
(cf. Figure \ref{fig:Ori}).  
There is numerous literature in this subject, for example,
\cite{DDHKT1,DDHKT2,DD, FT0, Hu, K} are fundamental references.
 In this paper, we focus on curved foldings 
which are produced from a single curve $\Gamma$ 
in $\R^2$.  It is then natural to consider the following question:

\medskip
\noindent
{\bf Question.}
{\it How many curved foldings can we have with the
same crease and crease pattern?
}

\medskip
For a given curved folding, 
it is well-known that there is an {\it adjacent curved folding}
with the same creases and crease patterns
(see Figure \ref{fig:2} and also \cite{FT0}, \cite[Pages 417, 419]{PW}),
which is based on the fact that there are
two solutions for developable surfaces
whose geodesic curvatures along $C$ 
are equal to the curvature of $\Gamma$ (cf.~\eqref{eq:beta}
and \eqref{eq:beta3}). 
Our purpose here is to give an answer to Question.
We prove the following:

\medskip
\noindent
{\bf  Theorem A.}
{\it The number of such curved foldings  
is at most four, that is, there are at most four possibilities for
curved foldings with a given pair of
crease and crease pattern}.

\medskip
Roughly speaking, the idea is as follows:
The above two solutions of developable surfaces
depend on the orientation of
crease pattern as a plane curve. 
We then reverse the orientation of the
curve $C$ and obtain totally four possibilities.
Moreover, we also show the following:

\medskip
\noindent
{\bf  Theorem B.}
{\it For a suitable choice of crease $C$ and 
crease pattern $\Gamma$,
the four curved foldings associated
with $C$ and $\Gamma$ are 
non-congruent, that is,
no two of them can be made equal by
any isometry of the Euclidean $3$-space $\R^3$. }

\section{Preliminaries}

\subsection{fundamental properties of developable strips}
We fix an interval $I=[a,b]$ ($a<b$) in $\R$ and
let $\mb c(t)$ ($t\in I$) be a $C^\infty$-differentiable 
curve embedded in $\R^3$ parametrized by 
arc-length, and denote by $\kappa(t):=|\mb c''(t)|$  
its curvature function, where $\mb c'':=d^2\mb c/dt^2$.
Throughout this paper, we assume $\kappa>0$.
The vector fields 
$$
\mb e:=\mb c',\quad \mb n:=(1/\kappa)\mb c'', \quad
\mb b:=\mb e\times \mb n
$$ 
along $C:=\mb c(I)$ are called
the {\it unit tangent vector field},
 {\it unit principal normal vector field} and
{\it unit bi-normal vector field} of $\mb c$, respectively,
where $\times$ denotes the vector product of $\R^3$.
In this paper, the torsion function $\tau$ of $\mb c$
is defined by 
$
\tau:=\mb n'\cdot \mb b,
$	
where the dot denotes the inner product of  $\R^3$.
We remark that the sign of  $\tau$
is opposite to that in \cite{FT0}.

We fix a positive number $\epsilon>0$.
To explain curved foldings, 
we consider a ruled surface
$
f:I \times (-\epsilon, \epsilon) \to \R^3 
$ 
written in the form
\begin{equation}\label{eq:F}
f(t,v):=\mb c(t)+v \xi(t) \qquad (t\in I,
\,\, |v|<\epsilon),
\end{equation}
where $\xi(t)$ is a vector field along $C$
given by 
\begin{equation}\label{eq:xi0}
\xi(t):=
\cos \beta(t) \mb e(t)+
\sin \beta(t)\Big(
\cos \alpha(t) \mb n(t)+\sin \alpha(t) \mb b(t)\Big),
\end{equation}
and $\alpha(t)$ and $\beta(t)$
are $C^\infty$-functions satisfying
\begin{equation}\label{eq:angle}
0<|\alpha(t)|<\frac{\pi}{2},\qquad 0<\beta(t)<\pi
\qquad (t\in I).
\end{equation}
We call such a $\xi$ the {\it normalized ruling vector field} of $f$.
If $\epsilon$ is sufficiently small, $f$ gives an
embedding.  

The functions $\alpha$ and $\beta$ are 
called the {\it first}  and {\it second angular functions}
of $f$, respectively.
A ruled surface $f$ is called a {\it developable strip} if
its Gaussian curvature $K$ vanishes identically.
It is well-known that $K=0$ is equivalent to
the condition $\det(\mb c',\xi,\xi')=0$ (cf. \cite[Appendix B-4]{UY-book}).
So $f$ is developable if and only if
\begin{equation}\label{eq:beta}
\cot \beta(t)=\frac{\alpha'(t)+\tau(t)}{\kappa(t)\sin \alpha(t)}
\end{equation}
holds.
In particular, $f$ is uniquely determined by the first angular function $\alpha$.
So we express $f$ as $f^\alpha$ when we wish to emphasize that $f$ is
associated with $\alpha$. 
The unit normal vector $\nu$ and unit co-normal vector ${\mb N}$ of 
$f^\alpha$ along $C$ are given by
\begin{align*}\label{eq:n}
\nu(t):=-\sin \alpha(t) \mb n(t)+\cos \alpha(t) \mb b(t),\qquad 
\mb N(t):=\cos \alpha(t) \mb n(t)+\sin \alpha(t) \mb b(t),
\end{align*}
respectively. 
So the geodesic curvature $\mu_f$ of $\mb c$ as a curve on $f$ is 
given by
\begin{equation}\label{eq:mu}
\mu_f=\mb c''\cdot \mb N
=\kappa\mb n\cdot (\cos \alpha \mb n+\sin \alpha \mb b)
=\kappa\cos \alpha.
\end{equation}
Using the Frenet-Serret formula (cf. \cite[(5.4)]{UY-book}),
we have
\begin{equation}\label{eq:nu-D}
\nu'=\kappa \sin \alpha \,\mb e-(\alpha'+\tau)\mb N.
\end{equation}
We let $ds^2=Edt^2+2Fdtdv+Gdv^2$ be the first fundamental form
of $f$ along the curve $C$. Then we have
\begin{equation}\label{eq:EFG}
E=
\Big(\sin\beta-v (\beta'+\kappa_g)\Big)^2+\cos^2\beta,\quad
F=\cos \beta,\quad G=1.
\end{equation}
In particular, it only depends on $\beta$ and $\kappa_g$.
Substituting $v=0$, we set
$$
E(t):=1,\quad F(t):=\cos \beta(t), \quad G(t):=1.
$$
On the other hand, using \eqref{eq:nu-D}, the second 
fundamental form $L(t)dt^2+2M(t)dtdv+N(t)dv^2$ 
of $f$ along the curve $C$ can be written  as follows: 
 \begin{align*}
&L(t)=-f_t(t,0)\cdot \nu'(t)=-\mb e\cdot \nu'=-\kappa \sin \alpha, \\
&M(t)=-f_t(t,0)\cdot \nu_v(t)=0,\quad
N(t)=-f_{v}(t,0)\cdot \nu_v(t)=0.
\end{align*}
So, the mean curvature function $H_f(t)$ of $f$ 
along  $C$ can be computed as (cf. \cite[(9.2)]{UY-book})
\begin{equation}\label{eq:H}
H_f
=-\frac{\kappa\sin\alpha\,\op{cosec}^2\beta}{2}=
-\frac{\kappa^2\sin^2 \alpha
+(\alpha'+\tau)^2}{2\kappa\sin \alpha}.
\end{equation}
Since $\sin \alpha(t)\ne 0$ ($t\in I$), 
the mean curvature function does not have
any zeros, and so $f$ has no umbilics along $C$.
In particular, the ruling direction (i.e. the asymptotic direction)
of $f$ is uniquely determined.

\subsection{Properties of Curved foldings}

We let $f:=f^\alpha$ be a developable strip with
the first angular function $\alpha$.
Since $f$ can be developed to a plane $\R^2$, 
the curve $C$ corresponds to a curve $\Gamma_f$ in $\R^2$.
Since the developing procedure preserves the
geodesic curvature $\mu_f$ of $C$, the curvature
function of the plane curve $\Gamma_f$ is
given by $\mu_f(=\kappa\cos \alpha)$ (cf. \eqref{eq:mu}).
We call $\Gamma_f$ the {\it generator} of $f$.

Conversely, we give a procedure to construct developable strips from
a given plane curve as follows:
Let $\gamma:I\to \R^2$ be an embedded curve
parametrized by arc-length without inflection points.
We set $\Gamma:=\gamma(I)$.
We assume the curvature function $\mu(t)$ 
of $\gamma(t)$ satisfies
\begin{equation}\label{eq:star}
0<\mu(t)<\kappa(t) \qquad (t\in I).
\end{equation}
We then define the function $\alpha:I\to (0,\pi/2)$
(cf. \eqref{eq:mu}) so that
\begin{equation}\label{eq:beta2}
\mu(t)=\kappa(t)\cos \alpha(t).
\end{equation}
Then the geodesic curvature  $\mu_f(t)$ of
$f:=f^\alpha$ coincides with the curvature function $\mu(t)$
of $\Gamma(t)$.
Here we examine other possibility for the
existence of a developable surface whose geodesic curvature is $\mu(t)$.
By \eqref{eq:beta2}, only one other possibility can be considered, which is
the case that the first angular function is $-\alpha(t)$. 
So we set 
$$
\check f:=f^{-\alpha},
$$
and call it the {\it dual} of $f$. 
Since the first angular function of the dual $\check f$
is $-\alpha$, the geodesic curvature $\mu_{\check f}$
of $\check f$ along $C$ is $\mu(t)$ as same as $f$, and
the second angular function $\check \beta$
of $\check f$ is given by (cf. \eqref{eq:beta})
\begin{equation}\label{eq:beta3}
\cot \check \beta(t)=\frac{\alpha'(t)-\tau(t)}{\kappa(t)\sin \alpha(t)}.
\end{equation}
If $C$ lies in a plane, then $\tau$ vanishes identically, and
\begin{equation}\label{eq:bb}
\beta=\check \beta
\end{equation}
holds. 
In particular, the initial plane curve $\Gamma$ 
can be considered as a common generator of $f$ and $\check f$.
We then set
$$
\phi_f(t,v):=
\begin{cases}
f(t,v) & (v\ge 0), \\
\check f(t,v) & (v<0), 
\end{cases}\qquad
\psi_f(t,v):=
\begin{cases}
\check f(t,v) & (v\ge 0), \\
f(t,v) & (v< 0), 
\end{cases}
$$
and call them {\it origami-maps} associated with
the developable surface $f$.
By definition
\begin{equation}\label{eq:p-p}
\phi_{f}=\psi_{\check f},\qquad \phi_{\check f}=\psi_{f}
\end{equation}
hold. 
The curve
$\Gamma$ is called the {\it crease pattern\/} of $\phi_f$
and $\psi_f$, which is also the generator of $f$ and $\check f$.
We call the image $P$ of the origami-map $\phi_f$
a {\it curved folding} along $C$ associated with $f$. 
Moreover, the image $\check P$ of 
the origami-map $\phi_{\check f}(=\psi_f)$
is called the {\it adjacent curved folding} 
associated with $P$. 
As in \cite{FT0}, $P\cap \check P=C$ holds,
and $P\cup \check P$ coincides with the union of
the images of $f$ and $\check f$
(these properties are mentioned in the introduction).
In Figure \ref{fig:2} (left, center),
a pair of adjacent curved foldings
along a circle is given. 
Actual curved foldings can be regarded as
the images of origami-maps.

\begin{Remark}
Let $\check P$ be the adjacent curved folding of $P$.
Then $P$ and $\check P$ 
can be isometrically deformed to subsets of $\R^2$ 
preserving its crease pattern, 
and the second angular functions $\beta$ and $\check \beta$
(cf. \cite[Theorem 3.1]{DDHKT2}).
Such a deformation is
called a {\it rigid-ruling folding motion} 
and are discussed in \cite{DDHKT2}
for a collection of curves. 
\end{Remark}

We prepare a lemma:

\begin{Lemma}\label{lem:LC}
Consider the set of intersections 
between $\phi_f$ and $\psi_f$ defined by
\begin{align*}
S:=\Big\{(t,v)\in \Omega_\epsilon \,;\, 
& \text{there exists $(t',v')\in \Omega_\epsilon\setminus\{(t,v)\}$} \\
& \phantom{aaaaaaaaaaaa}\text{such that $\phi_f(u,v)=\psi_f(t',v')$}
\Big\},
\end{align*}
where $\Omega_\epsilon:=I\times (-\epsilon,\epsilon)$.
Then $S$ has no interior points
for sufficiently small $\epsilon(>0)$.
\end{Lemma}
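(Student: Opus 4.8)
The plan is to reduce the statement to the geometric fact that, for small $\epsilon$, the images of the two developable strips $f$ and $\check f$ meet only along the crease $C$, and then to translate this back to the parameter domain. First I would recall that for sufficiently small $\epsilon$ both $f$ and $\check f$ are embeddings, and split the defining equation $\phi_f(t,v)=\psi_f(t',v')$ into the four cases determined by the signs of $v$ and $v'$. In the two cases where the equation reads $f(t,v)=f(t',v')$ or $\check f(t,v)=\check f(t',v')$, injectivity of $f$ (resp. $\check f$) forces $(t,v)=(t',v')$, which is incompatible with the opposite sign constraints on $v$ and $v'$; these cases contribute nothing. The remaining two cases are $f(t,v)=\check f(t',v')$ and $\check f(t,v)=f(t',v')$, so everything comes down to describing the intersection of the images of $f$ and $\check f$.

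To handle these, I would study the map $G(t,v,t',v'):=f(t,v)-\check f(t',v')$ and show that $G^{-1}(0)$ coincides, near $C$, with the diagonal locus $\Delta:=\{t=t',\ v=v'=0\}$. The key computation is that along $\Delta$ the differential $dG$ has the columns $\mb e$, $\xi$, $-\mb e$, $-\check\xi$, where $\check\xi$ is the normalized ruling vector field of $\check f$; hence its rank equals $3$ as soon as $\mb e$, $\xi$ and $\check\xi$ span $\R^3$. Writing $\xi$ and $\check\xi$ in the Frenet frame via \eqref{eq:xi0} (with first angular functions $\alpha$ and $-\alpha$), the $(\mb n,\mb b)$-components of $\xi$ and $\check\xi$ have determinant a nonzero multiple of $\sin\beta\,\sin\check\beta\,\sin\alpha\cos\alpha$, which is nonzero by \eqref{eq:angle} together with $0<\beta,\check\beta<\pi$. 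Thus $G$ is a submersion along $\Delta$, and $G^{-1}(0)=\Delta$ in a neighborhood of $\Delta$.

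The main obstacle is passing from this local statement to a single $\epsilon$ that works uniformly, since $t$ and $t'$ range over all of $I$ and need not be close. I would treat two regimes separately: by compactness of $I$ the submersion property upgrades to the existence of $\delta,\epsilon_1>0$ with $G^{-1}(0)\cap\{|t-t'|<\delta,\ |v|,|v'|<\epsilon_1\}=\Delta$; and for $|t-t'|\ge\delta$ the embeddedness of $\mb c$ yields, by continuity on the compact set $\{|t-t'|\ge\delta\}$, a uniform lower bound $|\mb c(t)-\mb c(t')|\ge c(\delta)>0$, so that $f(t,v)\neq\check f(t',v')$ once $\epsilon<c(\delta)/2$ (since $f(t,v)$ and $\check f(t',v')$ lie within $\epsilon$ of $\mb c(t)$ and $\mb c(t')$, respectively). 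Choosing $\epsilon$ below $\epsilon_1$, $c(\delta)/2$ and the embedding threshold, the only solutions of $G=0$ in $\Omega_\epsilon\times\Omega_\epsilon$ lie on $\Delta$.

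Combining the pieces, in each of the two surviving cases the equality $f(t,v)=\check f(t',v')$ (or its twin) forces $t=t'$ and $v=v'=0$, hence $(t',v')=(t,v)$, contradicting $(t',v')\neq(t,v)$. Therefore $S=\varnothing$ for small $\epsilon$, which is stronger than, and in particular implies, the asserted absence of interior points; this conclusion is also consistent with the relation $P\cap\check P=C$ recorded above.
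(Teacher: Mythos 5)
Your proof is correct, and it takes a genuinely different route from the paper's; it also proves more than the lemma asks. The paper argues by contradiction using curvature theory: by \eqref{eq:H} and \eqref{eq:angle} the mean curvature of $f$ (hence of $\check f$) never vanishes, so each strip has a uniquely determined asymptotic direction, namely its ruling; if $S$ contained a nonempty open set $W$, then (after using embeddedness of $f$ to discard the self-overlap cases, exactly as in your four-case reduction) an open piece of the image of one strip would lie inside the image of the other, forcing their ruling fields to coincide there, contradicting the linear independence of $\xi$ and $\check\xi$ --- the same nondegeneracy you compute as the determinant $-2\sin\beta\sin\check\beta\sin\alpha\cos\alpha\neq 0$. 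You instead prove the sharper statement $f(\Omega_\epsilon)\cap\check f(\Omega_\epsilon)=C$ directly, via a submersion/transversality argument for $G=f-\check f$ along the diagonal $\Delta$, upgraded to a uniform statement by compactness, together with the elementary far-regime bound $|f(t,v)-\check f(t',v')|\ge|\mb c(t)-\mb c(t')|-2\epsilon$. What your route buys: the conclusion $S=\varnothing$, strictly stronger than ``no interior points,'' and in effect an independent proof of the relation $P\cap\check P=C$ that the paper imports from \cite{FT0}, using only the implicit function theorem and compactness. What the paper's route buys: brevity, and no near/far decomposition or uniform-$\epsilon$ bookkeeping, since the uniqueness of asymptotic directions obstructs an open overlap wherever it sits in the strips, not only after it has been traced back to a neighborhood of the crease. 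Two small points to tighten in your write-up: since $I=[a,b]$ is closed, the implicit function theorem at the endpoint configurations $(a,0,a,0)$ and $(b,0,b,0)$ should be justified by first extending $\mb c$ and $\alpha$ (hence $f$ and $\check f$) smoothly past the endpoints; and you should note explicitly that $0<\check\beta<\pi$ holds, which is true because $\check\beta$ is itself a second angular function subject to \eqref{eq:angle}.
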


\begin{proof}
By \eqref{eq:H} and \eqref{eq:angle}, the mean curvature function
of $f$ never vanishes.
In particular, $f$ has no umbilics and $\xi(t):=f_v(t,0)$ along $C$
coincides with the (uniquely determined) asymptotic direction of $f$.
By the same reason, the ruling direction $\check \xi(t)$ of $\check f$ 
is also determined uniquely. We set
$$
\Omega_\epsilon^+:=I\times (0,\epsilon),
\quad
\Omega_\epsilon^-:=I\times (-\epsilon,0).
$$
Suppose that $S$ contains a non-empty open subset $W$ of $\Omega_\epsilon$.
Since $f$ is an embedding
for sufficiently small $\epsilon(>0)$,
$f(\Omega_\epsilon^+)$ cannot meet $f(\Omega_\epsilon^-)$.
In particular, $f(W)$ is a subset of $f(\Omega_\epsilon)$.
On the other hand,  since the first angular function of $\check f$
is $-\alpha$, the normalized ruling vector field  of $\check f$
is linearly independent of that of  $f$, which
contradicts the fact $f(W)\subset f(\Omega_\epsilon)$.
So the conclusion is obtained.
\end{proof}

An isometry $T$ of $\R^3$ 
which is not the identity map
is called a {\it symmetry} of $C$ if
$T(C)=C$.

\begin{Corollary}\label{cor:LC}
Suppose that $C$ has no symmetries. Then for any  
$\delta\in (0,\epsilon]$, the image $\phi_f(\Omega_\delta)$ 
cannot be congruent to a subset of $\psi_f(\Omega_\epsilon)$.
\end{Corollary}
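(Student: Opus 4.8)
The plan is to argue by contradiction. Suppose there is an isometry $T$ of $\R^3$ with $T\big(\phi_f(\Omega_\delta)\big)\subseteq \psi_f(\Omega_\epsilon)$, where $\Omega_\delta:=I\times(-\delta,\delta)$. The first observation I would record is that each of these images is a smooth surface away from the crease $C=\mb c(I)$ but is genuinely \emph{folded} along $C$. Indeed, along $C$ the two developable sheets composing $\psi_f$ (namely $\check f$ on $v\ge 0$ and $f$ on $v<0$) carry the unit normals $\check\nu=\sin\alpha\,\mb n+\cos\alpha\,\mb b$ and $\nu=-\sin\alpha\,\mb n+\cos\alpha\,\mb b$, and since $0<|\alpha|<\pi/2$ we have $\nu\neq\pm\check\nu$; hence the two tangent planes along $C$ are distinct and the non-smooth locus of $\psi_f(\Omega_\epsilon)$ is exactly $C$. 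The same holds for $\phi_f(\Omega_\delta)$.

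Next I would use that isometries preserve the distinction between smooth surface points and crease points. If $p$ is a crease point of $\phi_f(\Omega_\delta)$, then $T(p)$ is a crease point of $T(\phi_f(\Omega_\delta))$; since the latter is a subset of $\psi_f(\Omega_\epsilon)$, the point $T(p)$ cannot be a smooth point of $\psi_f(\Omega_\epsilon)$. (Were $\psi_f(\Omega_\epsilon)$ smooth at $T(p)$, each of the two sheets of $T(\phi_f(\Omega_\delta))$ meeting at $T(p)$ would be an open piece of this smooth surface and would therefore share its tangent plane, forcing the two tangent planes to coincide, a contradiction.) Thus $T(p)\in C$, so $T(C)\subseteq C$. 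Because $T$ is an isometry, $T(C)$ is a connected sub-arc of $C$ of the same length as $C$, whence $T(C)=C$. Since $C$ has no symmetries, this forces $T=\mathrm{id}$, and therefore $\phi_f(\Omega_\delta)\subseteq\psi_f(\Omega_\epsilon)$.

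Finally I would invoke Lemma \ref{lem:LC}. Fix any $t\in(a,b)$ and $v\in(0,\delta)$; then $\phi_f(t,v)=f(t,v)$ lies in $\psi_f(\Omega_\epsilon)$, so $f(t,v)=\psi_f(t',v')$ for some $(t',v')\in\Omega_\epsilon$. Since $f$ is an embedding for small $\epsilon$, the point $f(t,v)$ with $v>0$ can neither equal $f(t',v')$ with $v'<0$ nor lie on $C$; hence $v'>0$ and $f(t,v)=\check f(t',v')$. Moreover $(t,v)\neq(t',v')$, for otherwise $v\xi(t)=v\check\xi(t)$ would give $\xi(t)=\check\xi(t)$, contradicting the linear independence of the two ruling directions established in the proof of Lemma \ref{lem:LC}. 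Thus every such $(t,v)$ belongs to $S$, so $S$ contains the nonempty open set $(a,b)\times(0,\delta)$, contradicting Lemma \ref{lem:LC}.

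I expect the main obstacle to be the middle step: rigorously establishing that the non-smooth locus of each curved folding is exactly $C$ and that this locus is an isometry invariant, which is what reduces an arbitrary congruence to the identity. Once $T=\mathrm{id}$ is in hand, the concluding application of Lemma \ref{lem:LC} is essentially bookkeeping via the embedding property of $f$ and the distinctness of the two ruling directions.
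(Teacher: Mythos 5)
Your proof is correct and follows essentially the same route as the paper's: show that the isometry $T$ must carry the crease of $\phi_f$ into that of $\psi_f$ and hence fix $C$, invoke the absence of symmetries of $C$ to force $T=\mathrm{id}$, and then derive a contradiction with Lemma \ref{lem:LC}. The only difference is one of detail: you make explicit two steps the paper leaves implicit, namely why $T$ must preserve the crease (via the distinct tangent planes of the two sheets along $C$) and why the containment $\phi_f(\Omega_\delta)\subseteq\psi_f(\Omega_\epsilon)$ forces the intersection set $S$ to have interior points.
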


\begin{proof}
If not, there exist $\delta>0$ and an isometry $T$ of $\R^3$
such that $T\circ \phi_f(\Omega_\delta)$
is a subset of  $\psi_f(\Omega_\epsilon)$.
Since $T$ maps the crease of $\phi_f$ to
that of $\psi_f$, it fixes the curve $C$.
Since $C$ has no symmetries, $T$ must be the identity map.
Then we have
$\phi_f(\Omega_\delta)\subset
\psi_f(\Omega_\epsilon)$, which 
contradicts Lemma \ref{lem:LC}.
\end{proof}

We remark that there is an actual procedure to construct a pair of 
adjacent curved foldings as follows:  
Preparing two sheets of paper, we first draw a crease pattern on one sheet.
We next draw the same crease pattern also on the other sheet.
We then lay one sheet upon the other, and
staple these sheets together along the common crease pattern.
After that, we isometrically deform this
two joined sheets as in Figure \ref{fig:2} (right), where one can see 
a pair of curved foldings.

\begin{figure}[htb]
\begin{center}
\includegraphics[height=1.9cm]{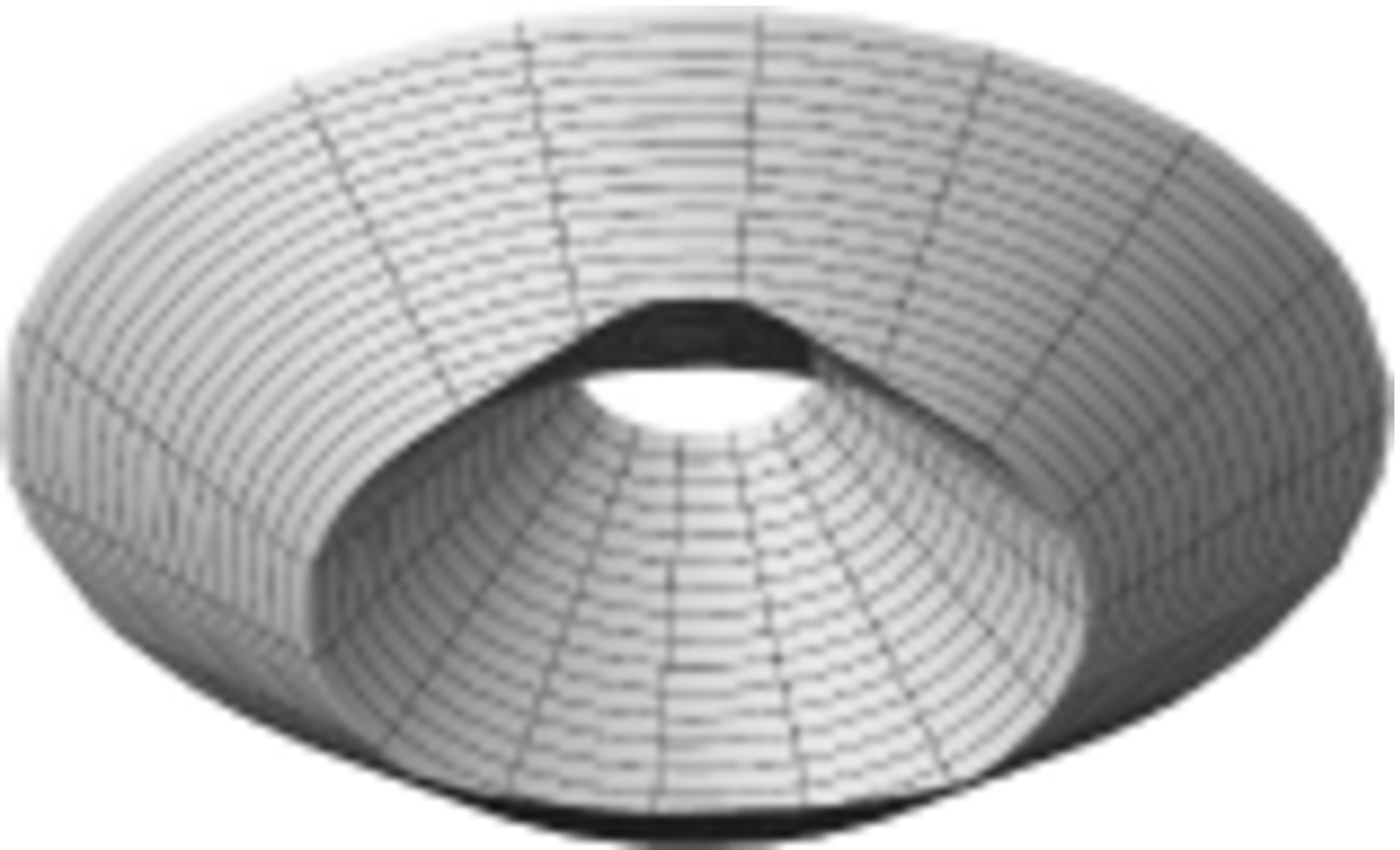}\,\,
\includegraphics[height=1.9cm]{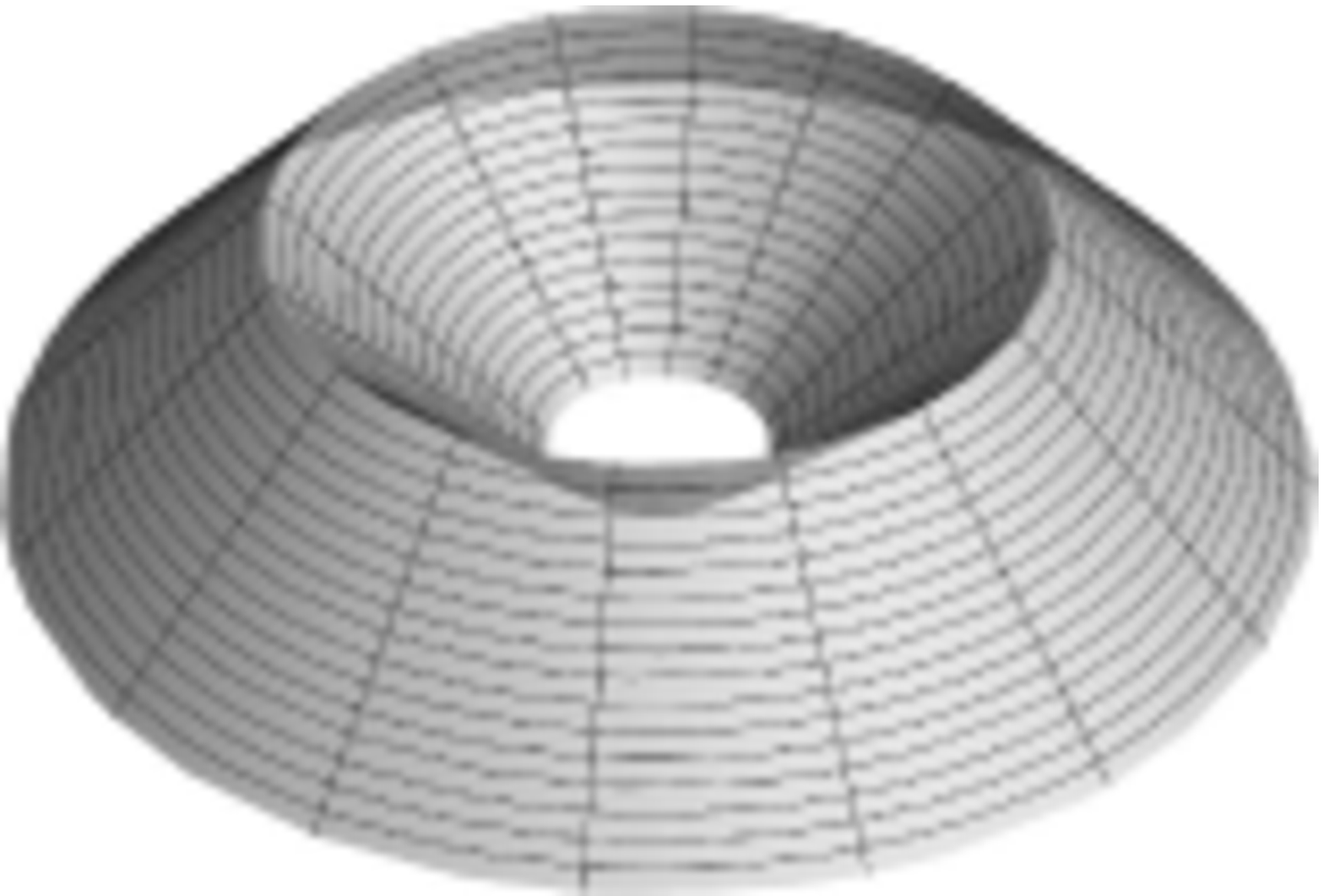}\qquad
\includegraphics[height=2.2cm]{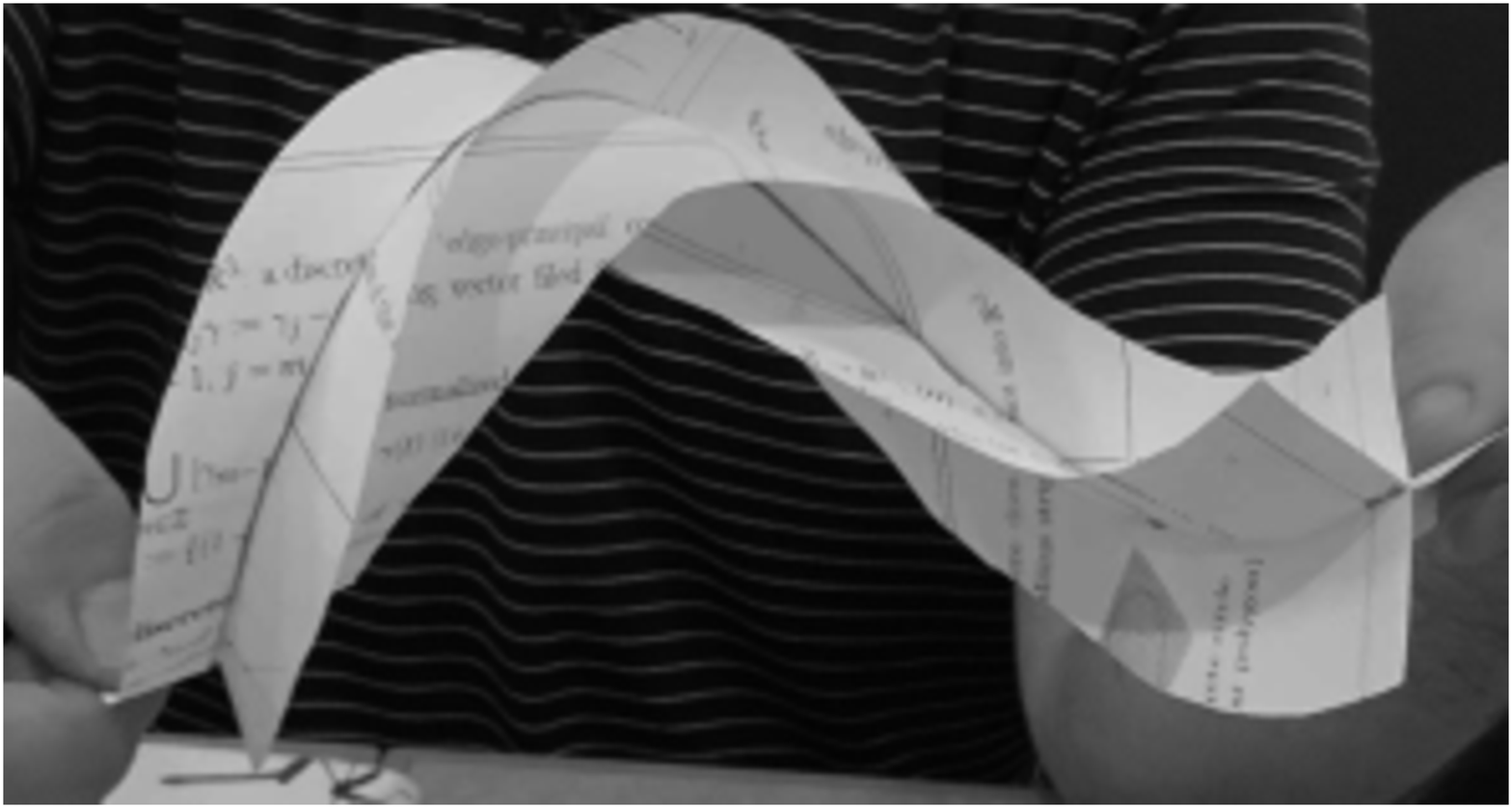}
\caption{A pair of two curved foldings along a circle (left, center)
and along an actual pair of adjacent curved foldings (right).}\label{fig:2}
\end{center}
\end{figure}

\section{Proof of Theorem A}
We set $I:=[a,b]$ $(a<b)$ and let $\gamma(t)$ ($t\in I$)
be a regular curve in $\R^2$ with arc-length parameter, which
has no inflection points. We set $\Gamma:=\gamma(I)$.
Let $\mu(t)$ be the curvature function of $\gamma(t)$.
Without loss of generality, we may assume $\mu>0$.
As in the introduction, we assume that the maximum  
of the absolute value of the curvature function of $\Gamma$ is
less than the minimum of the curvature function of $C$.
So we have
\begin{equation}\label{eq:cg}
\max_{t\in I} \left| \mu(t)\right| <\min_{t\in I}\kappa(t),
\end{equation} where $\kappa(t)$ is the curvature function of ${\mb c}(t)$.
We define
$
\alpha:I\to (0,\pi/2)
$
by
$$
\cos \alpha(t)=\frac{\mu(t)}{\kappa(t)},
$$
and we denote by $f(t,v)$ (resp. $\check f(t,v)$) the developable
strip along $\mb c(t)$ whose first angular function
is $\alpha(t)$ (resp. $-\alpha(t)$). 
We know that $f$ and $\check f$ are
only the possibility of developable strips along $\mb c(t)$
whose geodesic curvatures along $\mb c$ coincide with $\mu(t)$.
However, if we reverse the orientation of $C$, 
we can find other two possibilities of 
developable strips whose geodesic curvatures coincide with $\mu(t)$:
The curvature function $\kappa^\sharp(t)$ 
and the torsion function $\mu^\sharp(t)$ of 
$\mb c^\sharp (t):=\mb c(-t+a+b)$ is given by
$$
\kappa^\sharp(t)=\kappa(-t+a+b),\qquad \tau^\sharp(t)=\tau(-t+a+b),
$$
respectively.
There exists a unique function
$
\alpha_*:I\to (0,\pi/2)
$
such that
$$
\cos \alpha_*(t):=\frac{\mu(t)}{\kappa^\sharp(t)}.
$$
We let $f_*(t,v)$ (resp. $\check f_*(t,v)$)
be the developable strip along $\mb c^\sharp(t)$
whose first angular function is 
$\alpha_*(t)$ (resp. $-\alpha_*(t)$).
Then, by definition, the geodesic curvature of 
$f_*$ (resp. $\check f_*$)
along $\mb c^\sharp$
with respect to $f_*$ (resp.  $\check f_*$) is $\mu(t)$.
We call $f_*$ the {\it inverse} of $f$, and call the dual $\check f_*$ of $f_*$
the {\it inverse dual} of $f$. It can be easily checked that
$\check f_*$ coincides with the inverse of $\check f$.
The second angular function $\beta_*(t)$ (resp. $\check \beta_*(t)$)
of $f_*$ (resp.  $\check f_*$) satisfies
\begin{align*}
\cot \beta_*(t):=\frac{\alpha'_*(t)+\tau^\sharp(t)}{\kappa^\sharp(t)\sin \alpha_*(t)},
\quad
\left( \text{resp.}\,\,
\cot \check \beta_*(t):=\frac{\alpha'_*(t)-\tau^\sharp(t)}
{\kappa^\sharp(t)\sin \alpha_*(t)}\right).
\end{align*}
We denote by $P,\check P$, $P_*$ and $\check P_*$,
the images of $\phi_f$, $\phi_{\check f}(=\psi_f)$,
$\phi_{f_*}$ and  $\phi_{\check f_*}(=\psi_{f_*})$, respectively.
By this construction, 
 $P,\check P, P_*$ and $\check P_*$ have the same crease $C$ and
crease pattern $\Gamma$. 
Thus, to prove Theorem A, it is sufficient to show the
following assertion:

\begin{Proposition}\label{Prop:at4}
The number of curved foldings 
with the same crease and crease pattern
is at most four.
\end{Proposition}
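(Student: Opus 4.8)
The plan is to show that every curved folding having the given crease and crease pattern must coincide with one of the four surfaces $P,\check P,P_*$ and $\check P_*$ already constructed. By definition a curved folding along $C$ is the image of an origami-map $\phi_g$ associated with some developable strip $g$ whose base curve parametrizes $C$ and whose generator is the crease pattern $\Gamma$. Recall that $\phi_g$ glues $g$ (on $v\ge 0$) to its dual $\check g$ (on $v<0$), and that $\check g$ is the unique developable strip, other than $g$ itself, sharing the same generator as $g$ (cf.\ the discussion preceding \eqref{eq:beta3}). Consequently the whole folding is determined once the strip $g$ is known, and it suffices to enumerate the developable strips along $C$ whose generator is $\Gamma$.

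First I would pin down the combinatorial data a strip $g$ can carry. Since $C$ is parametrized by arc-length, its base curve is one of exactly two curves, namely $\mb c(t)$ or its orientation-reversal $\mb c^\sharp(t)=\mb c(a+b-t)$; no other arc-length parametrization of the point set $C$ is possible. I would then treat these two cases separately and show that each produces exactly two strips. Suppose the base curve of $g$ is $\mb c$, and let $\alpha_g$ be its first angular function. By \eqref{eq:mu} the geodesic curvature of $\mb c$ on $g$ equals $\kappa\cos\alpha_g$, and since developing $g$ carries $C$ to a plane curve whose curvature is exactly this geodesic curvature, the requirement that the generator be $\Gamma$ forces $\kappa(t)\cos\alpha_g(t)=\mu(t)$, i.e.\ $\cos\alpha_g=\mu/\kappa$. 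Because \eqref{eq:angle} restricts $\alpha_g$ to $0<|\alpha_g|<\pi/2$, the only solutions are $\alpha_g=\alpha$ and $\alpha_g=-\alpha$; as a developable strip is uniquely determined by its first angular function (the second angular function being then fixed through \eqref{eq:beta}), we get $g\in\{f,\check f\}$, and the associated foldings are $P$ and $\check P$. Running the identical argument with $\mb c$ replaced by $\mb c^\sharp$—so that $\kappa,\tau,\alpha$ are replaced by $\kappa^\sharp,\tau^\sharp,\alpha_*$—yields $g\in\{f_*,\check f_*\}$ and the foldings $P_*$ and $\check P_*$. Thus any curved folding with crease $C$ and crease pattern $\Gamma$ lies in $\{P,\check P,P_*,\check P_*\}$, which is the asserted bound.

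The step I expect to be the main obstacle is the reduction in the opening paragraph: making rigorous that an arbitrary curved folding really is an origami-map of the stated form, and in particular that its two flaps are a developable strip together with its dual rather than two unrelated strips. Here one must use that both flaps develop isometrically onto the single crease pattern $\Gamma$, so that the first angular functions of the upper and lower flaps both satisfy $\cos\alpha_g=\mu/\kappa$ and are therefore each $\pm\alpha$; the crease being a genuine fold then forces these two values to be distinct, since if they agreed the two flaps would fit together into one smooth developable surface with no crease along $C$. This distinctness is exactly the strip/dual relationship, which is why $\phi_f$ (upper $f$, lower $\check f$) and $\phi_{\check f}=\psi_f$ (upper $\check f$, lower $f$) exhaust the $\mb c$-case.

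Finally I would record two minor points so that the count is not over- or under-stated. The four listed surfaces need not be mutually distinct—for instance when $C$ is planar one has $\beta=\check\beta$ by \eqref{eq:bb}, and symmetries of $C$ may identify further pairs—but any such coincidence only decreases the number of foldings and hence respects the bound of four. Moreover the bound is independent of the chosen width $\epsilon$, since shrinking $\epsilon$ merely restricts each origami-map $\phi_g$ to a subset of $\Omega_\epsilon$ without creating new foldings.
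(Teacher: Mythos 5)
Your proof is correct and follows essentially the same route as the paper's own argument: since the endpoints of $\Gamma$ must correspond to those of $C$, the only ambiguities are the orientation of $C$ (giving $\mb c$ or $\mb c^\sharp$) and the sign of the first angular function, yielding at most $2\times 2=4$ foldings. Your write-up simply makes explicit the details (uniqueness of the strip given $\alpha$, the forcing $\cos\alpha_g=\mu/\kappa$, and the strip/dual structure of the two flaps) that the paper leaves implicit.
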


\begin{proof}
The two boundary points of $\Gamma$ correspond to 
those of $C$. So only the ambiguity to make
developable surfaces along $C$ fixing $\Gamma$
comes from the orientation of $C$. 
In each orientation of $C$, there is 
an ambiguity of the choice of signs of the first angular function.
So we obtain the conclusion.
\end{proof}

\section{Proof of Theorem B}

We set $I_0:=[1/10,9/10]$ and consider a space curve
$$
\mb c_0(t):=\left(\arctan t,
\frac{\log \left(1+t^2\right)}{\sqrt{2}},
t-\arctan t\right)
\qquad (t\in I_0),
$$
where $t$ gives an arc-length parameter.
We set $C_0:=\mb c_0(I_0)$. 
The curvature function $\kappa$ and
torsion function $\tau$ of $\mb c_0$ are
computed as
$$
\kappa(t)=\tau(t)=\frac{\sqrt{2}}{1+t^2}.
$$
Since $\kappa(t)\ne \kappa(1-t)$,
$C_0$ has no symmetries in $\R^3$
because of the fundamental theorem for space curves
(cf. \cite[Theorem 5.2]{UY-book}).
We then consider a function
$
\alpha(t):={\pi(t+10)}/24.
$
We let $\check f,f_*$ and $\check f_*$ be 
the three developable strips along $C_0$ 
induced by $f\,(=f^\alpha)$.
The generators of $f,\check f,f_*$ and $\check f_*$ 
coincides with the plane curve $\Gamma_0$ whose curvature
function is $\kappa(t)\cos \alpha(t)$.
For each $\delta\in (0,\epsilon]$, we set
$$
P_1(\delta)=\phi_{f}(\Omega_\delta),\quad
P_2(\delta)=\phi_{\check f}(\Omega_\delta),\quad
P_3(\delta)=\phi_{f_*}(\Omega_\delta),\quad
P_4(\delta)=\phi_{\check f_*}(\Omega_\delta).
$$
To prove Theorem B, it is sufficient to
show the following:

\begin{figure}[htb]
\begin{center}
\includegraphics[width=0.4\linewidth]{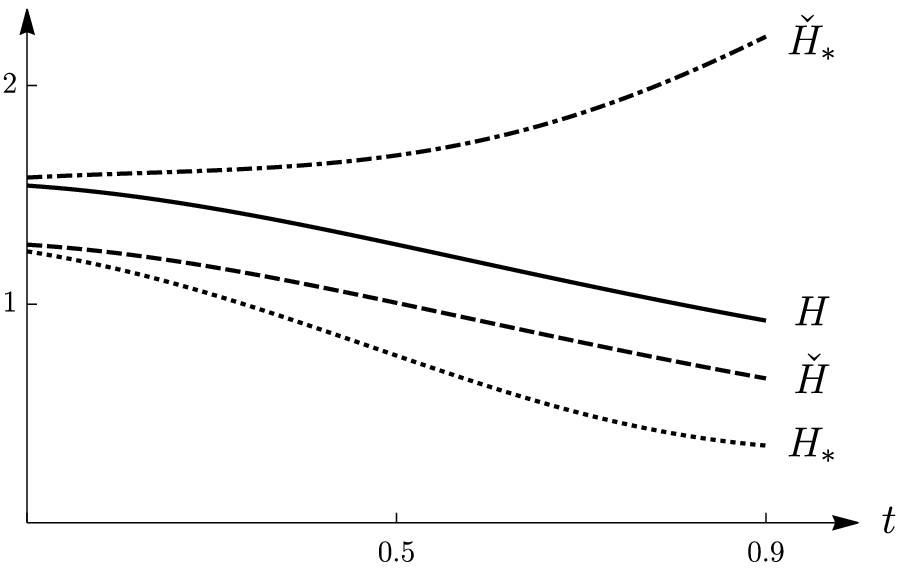}\qquad
\raisebox{2ex}{\includegraphics[width=0.5\linewidth]{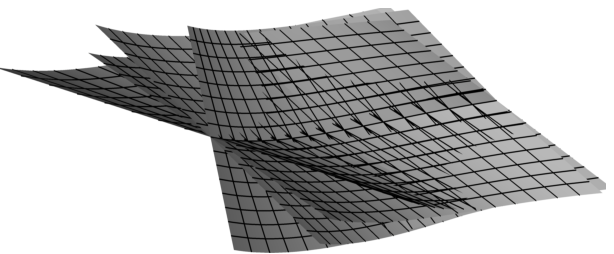}}
\caption{The graph of $H,\,\, \check H,\,\, H_*$
 and $\check H_{*}$
and the figures of four curved foldings
$\{P_i(\epsilon)\}_{i=1,\dots,4}$ for $\epsilon=1/5$. 
}\label{fig:4}
\end{center}
\end{figure}

\begin{Proposition}
Fix $\delta\in (0,\epsilon]$. 
Let $i,j$ be numbers belonging to $\{1,2,3,4\}$.
If an isometry $T$ of $\R^3$ satisfies
$T(P_i(\delta))\subset P_j(\epsilon)$, then $i$ must be
equal to $j$. 
\end{Proposition}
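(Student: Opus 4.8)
The plan is to reduce the given isometry $T$ to the identity, and then to treat separately the two \emph{within-pair} cases $\{P_1,P_2\}$, $\{P_3,P_4\}$ (same orientation of $C_0$) and the eight \emph{cross} cases (opposite orientations). First I would note that $T$ must carry the crease of $P_i(\delta)$ onto the crease of $P_j(\epsilon)$, since being a non-smooth (singular) point of the image is preserved by isometries. Each of these creases is the whole arc $C_0=\mb c_0(I_0)$, so $T(C_0)=C_0$. As $\kappa(t)\neq\kappa(1-t)$, the curve $C_0$ has no symmetries, whence $T$ is the identity map of $\R^3$ and the hypothesis becomes the honest inclusion $P_i(\delta)\subset P_j(\epsilon)$.

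For the within-pair cases $(i,j)\in\{(1,2),(2,1),(3,4),(4,3)\}$ I would appeal to Corollary \ref{cor:LC}. Since $P_2=\phi_{\check f}=\psi_f$, the case $(1,2)$ asserts $\phi_f(\Omega_\delta)\subset\psi_f(\Omega_\epsilon)$, which the corollary forbids; replacing $f$ by $\check f$, $f_*$ and $\check f_*$ and using the identities \eqref{eq:p-p} disposes of $(2,1)$, $(3,4)$ and $(4,3)$ in the same way. (These four cases in fact require no orientation argument, the corollary already ruling out every isometry.)

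The cross cases are the heart of the matter, and here the inclusion $P_i(\delta)\subset P_j(\epsilon)$ must be converted into a comparison of mean curvatures along $C_0$. Near an interior point $p=\mb c_0(s)$, each of $P_i,P_j$ is a union of two smooth developable sheets meeting transversally along $C_0$; transversality holds because the two normals of a sheet-pair, namely $\mp\sin\alpha\,\mb n+\cos\alpha\,\mb b$ (resp.\ the analogue with $\alpha_*$ and the Frenet frame of $\mb c^\sharp$), are distinct since $\sin\alpha\neq0\neq\sin\alpha_*$. A local argument then forces the \emph{unordered pair} of sheet-germs of $P_i$ along $C_0$ to coincide with that of $P_j$: a sheet $A$ of $P_i$ is a surface contained in the union of the two sheets of $P_j$, so its tangent plane at $p$ must equal one of theirs (otherwise $A$ would be covered by two transversal curves, impossible for a surface), and, meeting the other sheet only in a curve, $A$ coincides with the matching sheet of $P_j$. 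Hence the unordered pair $\{H^2,\check H^2\}$ of squared mean curvatures of the two sheets of $P_i$ at $\mb c_0(s)$ equals that of $P_j$ for every interior $s$.

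It remains to refute this equality. For $P_1,P_2$ the two sheet mean curvatures at $\mb c_0(s)$ are $H(s),\check H(s)$ computed from \eqref{eq:H} with $\tau=\kappa$ and $\alpha'=\pi/24$, while for $P_3,P_4$ they are $H_*(1-s),\check H_*(1-s)$, because $f_*,\check f_*$ run along $\mb c^\sharp(t)=\mb c_0(1-t)$. Using $\kappa^\sharp(1-s)=\kappa(s)$ and $\tau^\sharp(1-s)=\tau(s)$, one unordered pair is governed by $\alpha(s),\alpha'(s)$ and the other by $\alpha_*(1-s),\alpha_*'(1-s)$; since $\cos\alpha(s)=\mu(s)/\kappa(s)$ but $\cos\alpha_*(1-s)=\mu(1-s)/\kappa(s)$, and $\mu(s)\neq\mu(1-s)$ for generic $s\in I_0$, a direct computation shows the two pairs differ at some interior point. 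This is exactly the mutual distinctness of the graphs of $H,\check H,H_*,\check H_*$ in Figure \ref{fig:4}, and it contradicts the inclusion for every cross pair, completing the proof. The step I expect to be the main obstacle is the local sheet-matching: one must argue rigorously that containment of the folded surface $P_i$ inside $P_j$ propagates to equality of the individual developable sheets all along the crease, so that the isometry invariant $\{H^2,\check H^2\}$ becomes available for comparison.
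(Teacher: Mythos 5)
Your proposal is correct and takes essentially the same route as the paper's proof: reduce $T$ to the identity using that $C_0$ has no symmetries, dispose of the within-pair cases $\{P_1,P_2\}$ and $\{P_3,P_4\}$ via Lemma \ref{lem:LC}/Corollary \ref{cor:LC}, and rule out the cross cases by comparing the absolute mean curvatures of the sheets along the crease, where the paper verifies the strict ordering $H_*<\check H<H<\check H_*$ on all of $I_0$ (your pointwise version suffices). Your explicit local sheet-matching argument fills in a step the paper leaves implicit, but it is a refinement of detail rather than a different approach.
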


\begin{proof}
Suppose that the assertion fails.
By Lemma \ref{lem:LC}, $P_1(\delta)$ (resp. $P_2(\delta)$)
cannot be a subset of $P_2(\epsilon)$ (resp. $P_1(\epsilon)$),
and also $P_3(\delta)$ (resp. $P_4(\delta)$)
cannot be a subset of $P_4(\epsilon)$ (resp. $P_3(\epsilon)$).
So we may assume that $i=1,2$
and $j=3,4$ without loss of
generality.
Since $T(P_i(\delta))\subset P_j(\epsilon)$,
we have $T(C_0)=C_0$.
Since $C_0$ has no symmetry, $T$ must be the identity map.
So we have $P_i(\delta) \subset P_j(\epsilon)$.
To make it easy to see the subscriptions,
we set (cf. \eqref{eq:H})
\begin{align*}
&H(t):=|H_f(t)|,\quad
\check H(t):=|H_{\check f}(t)|,\\
&H_*(t):=|H_{f_*}(1-t)|,\quad
\check H_*(t):=|H_{\check f_*}(1-t)|.
\end{align*}
Using \eqref{eq:H}, one can compute that  
$H$, $\check H$,
$H_{*}$ and $\check H_{*}$ 
and can plot the graphs 
as indicated in Figure \ref{fig:4} (left).
Then we can observe the relation
$$
H_*<\check H< H< \check H_*
$$
hold on the interval $[1/10,9/10]$.
Thus, the images of the four surfaces are mutually distinct.
So we obtain the conclusion.
\end{proof}

In Figure \ref{fig:4} (right), the images of
the four surfaces $f,\,\check f,\,f_*$ and $\check f_*$
are indicated, which are all distinct.
In Figure \ref{fig:5}, the crease patterns with ruling directions
of the four curved foldings  
$\{P_i(\epsilon)\}_{i=1,\dots,4}$ for $\epsilon=
1/5$
are drawn. The difference of ruling angles in these four patterns
also implies that the four curved foldings have distinct images 
(the non-congruence of them does not follow from this fact
directly).

\begin{figure}[htb]
\begin{center}
\includegraphics[height=2.0cm]{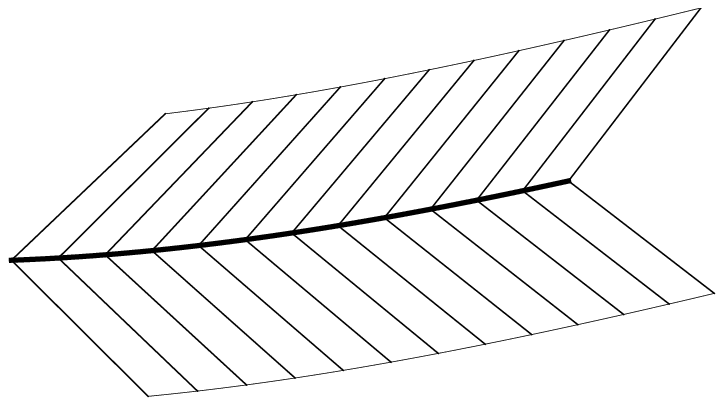}\qquad
\includegraphics[height=2.0cm]{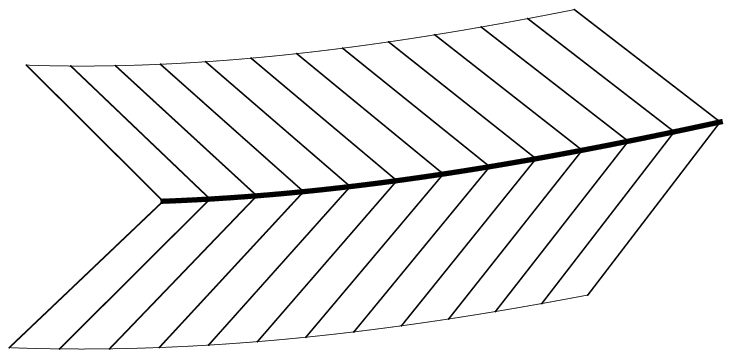} \\
\includegraphics[height=2.0cm]{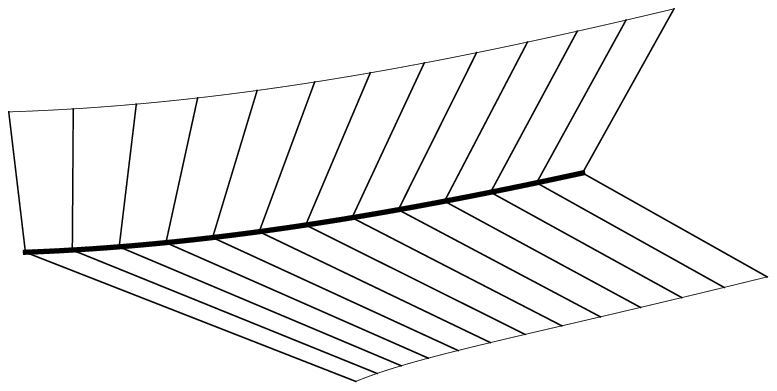}\qquad
\includegraphics[height=2.0cm]{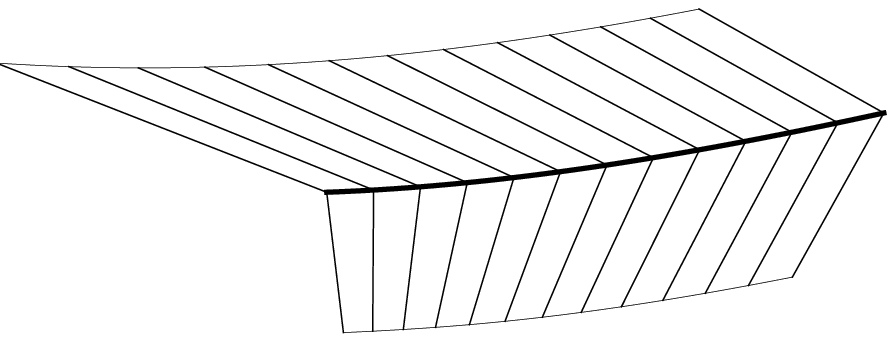} 
\caption{
Crease patterns with ruling directions
for $P_1(\epsilon)$ (upper-left), $P_2(\epsilon)$ (upper-right), $P_3(\epsilon)$ (lower-left)
and $P_4(\epsilon)$ (lower-right) for $\epsilon=1/5$.}\label{fig:5}
\end{center}
\end{figure}

\section{Final remarks}

\begin{itemize}
\item If $C$ or $\Gamma$ has a symmetry, 
the associated four curved foldings may not
be non-congruent. 
For example, we set $I:=[-\pi/6,\pi/6]$ and
consider a part of the unit circle 
$
\mb c_1(t)=(\cos t, \sin t,0)
$
($t\in I$) and set $C_1:=\mb c_1(I)$.
We fix a $C^\infty$-function 
$\alpha(t)$ ($t\in I$) satisfying $0<\alpha<\pi/2$
arbitrarily.
We let $f(t,v)$ be the developable strip along $C_1$
whose first angular function is $\alpha(t)$.
Since $C_1$ is fixed under two isometric transformations
$S:(x,y,z)\mapsto (x,y,-z)$
and $T:(x,y,z)\mapsto (x,-y,z)$,
we have
$\check f:=S\circ f$,  $f_*:=T\circ f$ and 
$\check f_*:=S\circ T\circ f$. 
So they are congruent each other. On the other hand, the four surfaces
$f,\check f,f_*$ and $\check f_*$ have distinct images 
whenever $\alpha(t)$ is not an odd function.
\item 
If the crease $C$ is a closed space curve, then
we have infinitely many
curved foldings along $C$ with
a common crease pattern $\Gamma$.
In fact, if $C$ is non-closed, then
the endpoints of $\Gamma$ correspond to
those of $C$. However, if $C$ is closed,
we have the freedom to assign the base point of 
$\Gamma$ to an arbitrary fixed point on $C$.
Moreover, these infinitely many curved
foldings are mutually non-congruent in 
general, see \cite{HNSUY} for details.
\end{itemize}

\begin{acknowledgments}
The authors thank Professors Jun Mitani, 
Sergei Tabachnikov and Wayne Rossman for valuable comments.
\end{acknowledgments}

\bigskip


\begin{thebibliography}{00}

\bibitem{DDHKT1}
E.~D.~Demaine, M.~L.~Demaine, D.~A.~Huffman, 
D.~Koschitz and T.~Tachi, 
{\it 
Characterization of curved creases and rulings: design and analysis 
of lens tessellations},  
Origami\!${}^6$, I. Mathematics, 209--230, 
Amer. Math. Soc., Providence, RI, 2015. 

\bibitem{DDHKT2}
E.~D.~Demaine, M.~L.~Demaine, D.~A.~Huffman, 
D.~Koschitz and T.~Tachi, 
{\it Conic crease patterns with reflecting rule lines},
 Origami\!${}^7$, {\bf 2}, 
Mathematics, 573--589, Tarquin, St. Albans, 2018. 

\bibitem{DD}
J.~P.~Duncan and J.~L.~Duncan, {\it Folded developables}, Proc. Roy. Soc. London Ser. A. 
{\bf 383} (1982), 191--205.

\bibitem{FT0}
D.~Fuchs and S.~Tabachnikov,
{\it More on paper folding},
The American Mathematical Monthly, {\bf 106} (1999), 27--35.


\bibitem{HNSUY}
  A.~Honda, K.~Naokawa, K.~Saji, M.~Umehara and K.~Yamada,
  {\itshape 
On the existence of four or more curved foldings with common creases
and crease patterns }, preprint (arXiv:1911.07166).


\bibitem{Hu}
D.~A.~Huffman, {\it Curvature and Creases: A Primer on Paper},
IEEE Transactions on computer, vol. C-25, (1976), 1010--1019.

\bibitem{K}
M.~Kilian, S.~Fl\"ory, Z.~Chen, N.~J.~Mitra, A.~Sheffer,
and H.~Pottmann,  {\it Curved Folding}, 
In ACM SIGGRAPH2008 Papers (SIGGRAPH'08). 
ACM, New York, NY, USA, Article 75, 
(2008), 9 pages.


\bibitem{PW}
H.~Pottmann  and J.~Wallner,  
{\rm Computational Line Geometry
 (Mathematics and Visualization)},  
Springer (2001). 

\bibitem{UY-book}
M. Umehara and K. Yamada,
Differential Geometry of Curves and Surfaces,
2017, World Scientific Inc.

\end{thebibliography}
\end{document}